\providecommand{\remove}[1]{}
\newcommand{\F}{\mathcal{F}}
\newcommand{\B}{\mathcal{B}}
\newcommand{\M}{\mathcal{M}}
\newcommand{\h}{\mathcal{H}}
\newtheorem{theorem}{Theorem}[section]
\newtheorem{proposition}[theorem]{Proposition}
\newtheorem{question}[theorem]{Question}
\newtheorem{definition}[theorem]{Definition}
\newtheorem{observation}[theorem]{Observation}
\newtheorem*{theorem*}{Theorem}
\newtheorem*{lemma*}{Lemma}
\newtheorem*{proposition*}{Proposition}
\begin{document}

\title{Blockers for Simple Hamiltonian Paths in Convex Geometric Graphs of
Even Order}

\author{Chaya Keller\thanks{Department of Mathematics, Ben-Gurion University of the NEGEV, Be'er-Sheva, Israel. \texttt{kellerc@math.bgu.ac.il}. Research partially supported by Grant 1136/12 from the Israel Science Foundation
and by the Hoffman Leadership and Responsibility Program at the Hebrew University.} \mbox{ }
and Micha A. Perles\thanks{Einstein Institute of Mathematics, Hebrew University, Jerusalem, Israel.
\texttt{perles@math.huji.ac.il}}
}

\maketitle

\begin{abstract}

Let $G$ be a complete convex geometric graph on $2m$ vertices, and let $\mathcal{F}$ be a family of subgraphs of $G$. A \emph{blocker} for $\mathcal{F}$ is a set of edges, of smallest possible size, that meets every element of $\mathcal{F}$. In~\cite{KP12} we gave an explicit description of all blockers for the family of simple perfect matchings (SPMs) of $G$. In this paper we show that the family of simple Hamiltonian paths (SHPs) in $G$ has exactly the same blockers as the family of SPMs. Our argument is rather short, and provides a much simpler proof of the result of~\cite{KP12}.

\end{abstract}

\section{Introduction}

In this paper we consider convex geometric graphs, i.e., graphs whose vertices are points in convex position in the plane, and whose edges are segments connecting pairs of vertices. Let $G=CK(2m)$ be the complete convex geometric graph of order $2m$. All graphs we consider throughout the paper are subgraphs of $G$, and thus, we slightly abuse notation identifying a graph with its set of edges.
\begin{definition}
A \emph{simple perfect matching} (SPM) in $G$ is a set of $m$ pairwise disjoint edges (i.e., edges that do not meet, not even in an interior point). A \emph{simple Hamiltonian path} (SHP) in $G$ is a non-crossing path of length $2m-1$ (i.e., containing all $2m$ vertices and $2m-1$ edges).
\end{definition}

For a family $\F$ of subgraphs of $G$, a natural Tur\'{a}n-type question is: what is the maximal possible number of edges in a geometric graph on $2m$ vertices that does not contain any element of $\F$? This question was extensively studied with respect to various families $\F$, e.g., all sets of $k$ disjoint edges~\cite{K79,KP96} and all sets of $k$ pairwise crossing edges~(\cite{CP92}, and see also~\cite{BKV03}).

\medskip

\noindent An equivalent way to state the question is to consider sets that ``block'' all elements of $\F$:
\begin{definition}
A set of edges in a geometric graph $G$ is called a blocking set for $\F$
if it intersects (i.e., contains an edge of) every element of $\F$. A \emph{blocker} for $\F$ is a blocking set of smallest possible size. The family of blockers for $\F$ is denoted $\B(\F)$.
\end{definition}
Using this formulation, the above question is equivalent to the question:
\begin{question}\label{Q1}
What is the size of the blockers for $\F$?
\end{question}
In various cases, including the two cases mentioned above, the answer to Question~\ref{Q1} is known, and then, the natural desire is to provide a \emph{characterization} of the blockers for $\F$.

\medskip

\noindent In~\cite{KP12} we provided a complete characterization of the blockers for the family $\M$ of SPMs of $G$. The characterization involves the notion of a \emph{caterpillar tree}~\cite{Caterpillar1}.
\begin{definition}
A tree $T$ is a caterpillar if the derived graph $T'$ (i.e., the graph obtained from $T$ by removing all leaves and
their incident edges) is a path (or is empty). A geometric caterpillar is simple if it does not contain a pair of crossing edges. A longest (simple) path in a caterpillar $T$ is called a spine of $T$.
\end{definition}
\begin{theorem}[\cite{KP12}]
Let $V$ be the set of vertices of $G$ (viewed as the vertex set of a convex polygon $P$ in the plane), labelled cyclically from $0$ to $2m-1$. Any blocker for $\M$ is a simple caterpillar tree whose spine lies on the boundary of the polygon and is of length $t \geq 2$. If the spine ``starts'' with the vertex $0$ and
the edge $[0,1]$, then the edges of the blocker are:
\begin{equation}\label{Eq:Blocker-old}
\{[i-1,i]:1 \leq i \leq t\} \cup
\{[t+j-1-\epsilon_{t+j},t+j+\epsilon_{t+j}]:1 \leq j \leq m-t\},
\end{equation}
where the $\epsilon_i$ are natural numbers satisfying $1 \leq
\epsilon_{t+1} < \epsilon_{t+2} < \ldots < \epsilon_m \leq m-2$.

Conversely, any set of $m$ edges of the described form is a blocker for $\M$. \label{Thm:SPMs}
\end{theorem}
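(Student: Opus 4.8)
\smallskip
\noindent\emph{Proof plan.} I would begin from the elementary observation that \emph{every SHP contains an SPM}: for a simple Hamiltonian path $v_0v_1\cdots v_{2m-1}$, its ``odd'' edges $\{v_0v_1,v_2v_3,\dots,v_{2m-2}v_{2m-1}\}$ form a perfect matching, which is non-crossing because the whole path is. Consequently any edge set meeting every SPM also meets every SHP, so every blocking set for $\M$ is a blocking set for the family of SHPs; in particular the minimum size of an SHP-blocking set is at most $\beta(\M)$, where $\beta(\F)$ denotes the common size of the blockers for $\F$. I would then derive the theorem from three facts: (1) every SHP-blocking set has at least $m$ edges; (2) every set $B$ of the form~\eqref{Eq:Blocker-old} is a blocking set for $\M$ --- so $\beta(\M)=m$ and the minimum SHP-blocking size equals $m$; (3) every SHP-blocking set of size $m$ has the form~\eqref{Eq:Blocker-old}.

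For Fact~(1) I would invoke the classical decomposition of $K_{2m}$ into $m$ Hamiltonian paths $P_0,\dots,P_{m-1}$, where $P_k$ is the zig-zag path $k,\,k+1,\,k-1,\,k+2,\,k-2,\dots$ (indices mod $2m$): a short computation shows the $P_k$ are pairwise edge-disjoint, and in convex position each $P_k$ consists of one boundary edge followed by a nested family of chords, hence is simple. Thus $CK(2m)$ contains $m$ pairwise edge-disjoint SHPs, and an SHP-blocking set must use a distinct edge from each. For Fact~(2) I would suppose some $M\in\M$ avoids such a $B$ and reach a contradiction: since $M$ misses every spine edge $[i-1,i]$, the parity rule --- each matching chord of an SPM cuts off an even number of vertices on either side --- pushes the $M$-partner of vertex $0$ out past the spine, and iterating along $0,1,\dots,t$ shows that near the vertices $t,t+1,\dots$ the matching $M$ must use a chord straddling one of those gaps; since the arcs $[t+j-1-\epsilon_{t+j},\,t+j+\epsilon_{t+j}]$ have strictly increasing radii $\epsilon$ sweeping the full admissible range, $M$ is forced onto one of them. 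I expect Fact~(2) to be a short but slightly technical finite induction on $m$ (or on $m-t$), and to be the place where routine case analysis lives.

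Fact~(3) is the heart of the argument, and the reason SHPs give a shorter route than the direct matching analysis of~\cite{KP12}: SHPs are flexible enough that almost every size-$m$ set $B$ is escaped by some SHP. I would prove the contrapositive by an exchange/rerouting argument --- start from a convenient SHP and reroute around the edges of $B$ while keeping the path simple. Concretely: (i) a careful look at Hamiltonian paths that hug the boundary forces $B$ to contain a \emph{single consecutive run} of at least two boundary edges, which becomes the spine, of length $t\ge2$; (ii) after relabelling so the spine is $[0,1],\dots,[t-1,t]$, each remaining edge of $B$ must attach to an interior vertex of the spine and be symmetric about one of the $m-t$ gaps between consecutive vertices $t,t+1,\dots,m$, i.e.\ of the form $[k-1-\epsilon_k,\,k+\epsilon_k]$, for otherwise a short Hamiltonian detour avoids $B$; (iii) the $\epsilon_k$ must be distinct, increasing, and lie in $[1,m-2]$, again by exhibiting an explicit escaping SHP when they are not. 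The main obstacle is making the ``otherwise reroute'' step uniform; I would isolate it as a standalone extension lemma --- roughly, \emph{a simple path on a block of consecutive vertices, together with a forbidden edge set of size at most $m$ that is not of the rigid form above, extends to a simple Hamiltonian path avoiding that set} --- proved by induction on the number of vertices, after which Fact~(3) follows by applying the lemma along the spine.

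Finally I would assemble the pieces. Facts~(1)--(2) give $\beta(\M)=m$ and minimum SHP-blocking size $=m$. A blocker for $\M$ is then a size-$m$ blocking set for $\M$, hence a size-$m$ blocking set for the SHPs by the opening observation, hence of the form~\eqref{Eq:Blocker-old} by Fact~(3); conversely Fact~(2) says every set of that form is a blocker for $\M$. This is exactly the statement of the theorem, and the same bookkeeping shows in addition that $\B(\M)$ equals the family of blockers for the SHPs.
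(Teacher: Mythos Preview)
Your overall strategy coincides with the paper's: derive the characterization for SHP blockers first (your Fact~(3)), and then transfer to SPM blockers via the inclusion $\B(\M)\subset\B(\h)$ together with the common size~$m$. The structural steps you list for Fact~(3) --- (i) boundary edges form a single run, (ii) each remaining edge joins an interior spine vertex to the complementary arc, (iii) the radii~$\epsilon_k$ are strictly increasing --- are exactly the paper's Proposition~3.1 and items~(1)--(3) of the proof of the main theorem.

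The substantive gap is that you never isolate the \emph{direction} constraint, which is the engine of the paper's short argument. Defining the direction of $[i,j]$ as $i+j\pmod{2m}$, the paper first shows (Observation~2.1) that every SHP blocker contains exactly one edge in each \emph{odd} direction: your zig-zag decomposition only gives one edge per pair $\{2k,2k+1\}$; one also needs the shifted SHPs $D(2i-1)\cup D(2i)$ to rule out two consecutive missed directions, and then the boundary Hamiltonian cycle to exclude the all-even choice. This single observation does a great deal of work downstream. It makes the ``centred about a half-integer gap'' form in your step~(ii) automatic (an edge in direction $2(t+j)-1$ is necessarily $[t+j-1-\epsilon,\,t+j+\epsilon]$), it lets the paper's rerouting constructions assume that every edge of $B$ has odd order (used explicitly in the proof of Proposition~3.1), and it reduces each of (i)--(iii) to exhibiting one explicit escaping SHP built from two zig-zags and a boundary arc. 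Your proposed substitute --- a general inductive ``extension lemma'' for simple paths --- would have to reproduce all of this without knowing the directions of the edges of $B$, and it is not clear that such a lemma can be formulated cleanly or proved uniformly; at the very least it would be considerably longer than the paper's route.

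A smaller point: the paper does not reprove your Fact~(2) (the ``easy direction'' that every set of the form~\eqref{Eq:Blocker-old} blocks all SPMs); it cites~\cite{KP12}. Your parity sketch is a reasonable start, but as written it does not yet explain why the strictly increasing sequence $\epsilon_{t+1}<\cdots<\epsilon_m$ forces a hit --- that is precisely where the argument has content.
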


The geometric interpretation of~\eqref{Eq:Blocker-old} is as follows. If the polygon $P$ is regular
(which can be assumed w.l.o.g.), then the direction of each consecutive
edge of the blocker, as listed above, is obtained from the
direction of the preceding edge by rotation by $\pi/m$ radians. In
the first $t$ edges, the ``back'' endpoint of each edge is the
``front'' endpoint of the previous edge. Starting with the
$(t+1)$-st edge, the ``back'' endpoint goes ``back'' (as reflected
by subtraction of the corresponding $\epsilon_i$), and the length
of the edge changes accordingly. An example of a blocker for $\M$ is presented in Figure~\ref{fig:T1}.

\medskip

The proof of Theorem~\ref{Thm:SPMs} uses only elementary tools, but is rather complex, spanning about
15 pages.

\bigskip

In this paper we provide a complete characterization of the blockers for the family $\h$ of all simple Hamiltonioan paths (SHPs). As in any SHP, the odd-labelled edges form an SPM, it is clear that any blocking set for $\M$ is also a blocking set for $\h$. It is easy to show that both the blockers for $\M$ and the blockers for $\h$ are of size $m$ (see Section~\ref{sec:Preliminaries}). Hence, we clearly have $\B(\M) \subset \B(\h)$. We show that these two families of blockers are actually equal.
\begin{theorem}\label{Thm:Main}
Let $G$ be the complete convex geometric graph on $2m$ vertices. Denote by $\B(\M)$ and $\B(\h)$ the families of blockers for the families $\M$ of simple perfect matchings and $\h$ of simple Hamiltonian paths in $G$, respectively. Then $\B(\h)=\B(\M)$.
\end{theorem}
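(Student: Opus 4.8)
The plan is to prove the two inclusions $\B(\M)\subseteq\B(\h)$ and $\B(\h)\subseteq\B(\M)$ separately, using the already-established fact that both families consist of edge-sets of size $m$.

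\medskip
\noindent\textbf{The easy inclusion.}
First I would observe that $\B(\M)\subseteq\B(\h)$. This is because in any SHP $P=v_0v_1\cdots v_{2m-1}$, the edges $v_0v_1, v_2v_3,\ldots,v_{2m-2}v_{2m-1}$ (the ``odd-position'' edges) form a simple perfect matching, since $P$ is non-crossing. Hence any set blocking every SPM also meets this sub-matching, so it blocks every SHP; in particular every SPM-blocking set is an SHP-blocking set. Since $|\B(\M)|$-members and $|\B(\h)|$-members both have size $m$ (proved in Section~\ref{sec:Preliminaries}), a minimum SPM-blocking set of size $m$ is also a minimum SHP-blocking set, giving $\B(\M)\subseteq\B(\h)$.

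\medskip
\noindent\textbf{The hard inclusion.}
The substance of the theorem is $\B(\h)\subseteq\B(\M)$: every minimum SHP-blocking set is in fact a minimum SPM-blocking set. I would argue the contrapositive in the form: if a set $S$ of $m$ edges is \emph{not} a blocking set for $\M$ (equivalently, by Theorem~\ref{Thm:SPMs}, is not a blocker for $\M$ — though here I must be careful, since a non-blocker of size $m$ just fails to meet some SPM), then $S$ is not a blocking set for $\h$, i.e., there is an SHP avoiding $S$. Concretely: suppose $S$ misses some SPM $M=\{e_1,\ldots,e_m\}$. The natural idea is to take the $m$ disjoint edges of $M$ and ``stitch'' them into a non-crossing Hamiltonian path by adding $m-1$ connecting edges, chosen so that none of the $2m-1$ path-edges lies in $S$. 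Since $M$ is non-crossing, its edges can be linearly ordered by the convex structure, and consecutive edges of $M$ in this order can be joined by a ``short'' chord; one then has roughly two choices for each connector (joining at one endpoint versus the other, or going around one side versus the other), and a counting/pigeonhole argument — comparing the number of forbidden edges in $S$ to the number of available connector slots — should show a valid choice exists, unless $S$ has a very rigid structure. The main obstacle is exactly this: handling the boundary cases where $S$ is ``almost'' a blocker for $\M$, so that the stitching is tightly constrained; I expect one must either invoke finer structural information about size-$m$ near-blockers, or set up the stitching as a connectivity/Hall-type argument on an auxiliary graph whose vertices are the edges of $M$ and whose edges are the permissible connectors, showing this auxiliary graph has a Hamiltonian path whenever $S$ is not itself a blocker.

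\medskip
\noindent\textbf{Deriving the old theorem.}
Finally, since $\B(\h)=\B(\M)$ and the abstract promises that the SHP analysis is short, I expect the paper actually characterizes $\B(\h)$ directly (by an elementary argument about which size-$m$ edge-sets meet every SHP, exploiting the extra flexibility that SHPs are ``connected'' and thus easier to avoid) and then reads off Theorem~\ref{Thm:SPMs} as a corollary via the equality $\B(\M)=\B(\h)$. So in my proof plan the logical order would be: (1) both blocker families have size $m$; (2) $\B(\M)\subseteq\B(\h)$ trivially; (3) characterize $\B(\h)$ from scratch, or at least show every member of $\B(\h)$ has the caterpillar form of~\eqref{Eq:Blocker-old}; (4) conclude $\B(\h)\subseteq\B(\M)$, hence equality, hence a new proof of Theorem~\ref{Thm:SPMs}.
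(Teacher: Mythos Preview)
Your easy inclusion and your final paragraph's logical skeleton (size $m$; $\B(\M)\subseteq\B(\h)$; show every $B\in\B(\h)$ has the caterpillar form of~\eqref{Eq:Blocker-old}; conclude equality and re-derive Theorem~\ref{Thm:SPMs}) match the paper exactly. The problem is that you supply no content for the one step that carries all the weight, namely step~(3).

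Your middle ``stitching'' paragraph is not the paper's route, and as written it is a genuine gap. You propose: given $S$ of size $m$ missing an SPM $M$, thread $M$ into an SHP avoiding $S$ by choosing $m-1$ connectors, using a two-choices-per-slot pigeonhole. But the connector choices are not independent (simplicity of the path couples them globally through the nesting structure of $M$), and $S$ may sit entirely among the candidate connectors, so a naive count does not close. You yourself flag the ``almost-blocker'' boundary cases and then retreat to a Hall-type argument on an auxiliary graph that you do not define. Nothing here is wrong in spirit, but nothing is proved either.

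The paper instead works forward from $B\in\B(\h)$ and forces structure by \emph{exhibiting specific SHPs} that $B$ would miss were the structure violated. Concretely: from Observation~\ref{Obs1} ($B$ has exactly one edge in each odd direction), one first shows the boundary edges of $B$ form a single subpath of $\partial P$ --- this is Proposition~\ref{Prop1}, proved by constructing $k-1$ pairwise edge-disjoint (on odd-order edges) SHPs that together cannot all be hit if the boundary edges of $B$ are split into two arcs. Then three further explicit SHP constructions (a boundary-plus-zig-zag path $P_0$, and a three-piece zig-zag path $P_1$) rule out, respectively, diagonals with both endpoints off the boundary path and pairs of diagonals whose roots on the boundary path increase with direction. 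These constructions are the proof; your proposal contains none of them. If you want to complete your plan, the missing idea is precisely this: manufacture SHPs whose odd-direction edges lie in very few directions (so that Observation~\ref{Obs1} leaves $B$ almost no room to hit them), and use them to pin down $B$ edge by edge.
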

The proof of Theorem~\ref{Thm:Main} is rather short, and in particular, it yields a short proof for Theorem~\ref{Thm:SPMs} -- much shorter than the proof presented in~\cite{KP12}.

The characterization of blockers for SHPs in a complete convex geometric graph of odd order $2m+1$ is more complicated, and will be presented in a subsequent paper. 

\begin{figure}[tb]
\begin{center}
\scalebox{0.6}{
\includegraphics{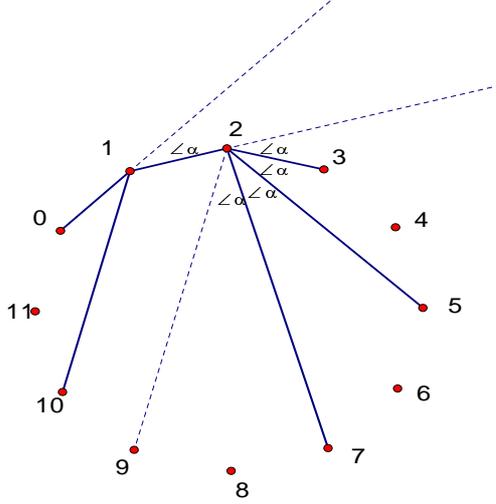}
} \caption{A blocker in a convex geometric graph on $12$ vertices with spine of length $t=3$. The
edges of the blocker are drawn as full (not dotted) lines. In
the notation of Theorem~\ref{Thm:SPMs}, $\epsilon_4=1$,
$\epsilon_5=2$, and $\epsilon_6=4$. The angle $\alpha$ is $\pi/6$
radians. The diagonal $[2,9]$ is parallel to the diagonal
$[1,10]$, and helps to depict the angle between the diagonals
$[2,7]$ and $[1,10]$.} \label{fig:T1}
\end{center}
\end{figure}

\section{Notations and Observations}
\label{sec:Preliminaries}

We view the vertices of $G$ as the vertices of a convex polygon $P$ of order $2m$. We label them, in order, by the numbers $0,1,2,\ldots,2m-1$. We regard the labels as elements of the cyclic group $\mathbb{Z}_{2m}=\mathbb{Z}/2m\mathbb{Z}$, and define the \emph{direction} of the edge $[i,j]$ to be $i+j (\bmod 2m)$. For $0 \leq k < 2m$, we denote by $D(k)$ the set of all edges in direction $k$. Two edges of the same direction are called \emph{parallel}. The \emph{order} of an
edge $[i,i+k]$ (where the addition is modulo $2m$) is $\min(k,2m-k)$.

Note that for odd $k$, the set $D(k)$ is an SPM. For even $k$, $D(k)$ is an ``almost perfect simple matching'', containing $m-1$ edges. Clearly, $E(G)$ is the disjoint union of $D(0),D(1),\ldots,D(2m-1)$. If the vertices of $G$ form a regular $2m$-gon, then each $D(k)$ is indeed a set of parallel line segments.

\medskip

The sets $D(2i)$ ($0 \leq i<m$) are $m$ pairwise disjoint SPMs. Similarly, the sets $D(2i) \cup D(2i+1)$ $(0 \leq i<m)$ are $m$ pairwise disjoint SHPs. It follows that the blockers for SPMs/SHPs in $G$ must be of size $\geq m$. As shown in the `easy direction' of the result of~\cite{KP12}, the blockers for SPMs are indeed of size $m$. Since each SHP includes an SPM as mentioned above, this implies that the blockers for SHPs are also of size $m$.

Let $B$ be a blocker for $\h$. The $m$ edges of $B$ belong to at most $m$ directions (out of the $2m$ possible directions). Since the union of any two consecutive directions (i.e., $D(2i-1) \cup D(2i)$ or $D(2i) \cup D(2i+1)$) is an SHP, $B$ cannot miss two consecutive directions. This leaves only two choices: either exactly one edge in each odd direction $D(2i-1)$, or exactly one edge in each even direction $D(2i)$. The second choice leaves all boundary edges $[2i-1,2i],[2i,2i+1]$ untouched, and these form a simple Hamiltonian circuit. Thus, we are left with the first choice only: one edge in each odd direction. We have thus proved:
\begin{observation}\label{Obs1}
Any blocker for $\h$ consists of $m$ edges: one edge in each odd direction.
\end{observation}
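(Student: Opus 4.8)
The plan is to pin down which directions the $m$ edges of a blocker $B\in\B(\h)$ can occupy, and then break the apparent symmetry between odd and even directions to force the odd ones. First I would record that $B$ has exactly $m$ edges (established in the preliminaries), so it occupies at most $m$ of the $2m$ directions $D(0),\dots,D(2m-1)$. The engine of the argument is the fact that the union of \emph{any} two cyclically consecutive directions $D(k)\cup D(k+1)$ is an SHP: for $k$ even this is given, and for $k$ odd it follows by applying the reflection $j\mapsto -j\pmod{2m}$, an isometry of $P$ that sends $D(k)$ to $D(-k)$ and hence carries the even--odd pairs onto the odd--even pairs while preserving the property of being an SHP. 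Since $B$ must meet every such SHP, it cannot miss two consecutive directions; equivalently, the set $\bar S$ of directions \emph{missed} by $B$ is an independent set in the cycle $C_{2m}$ on the $2m$ directions.

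Next I would count. Because $B$ occupies at most $m$ directions, $|\bar S|\ge m$; but the independence number of $C_{2m}$ is exactly $m$. Hence $|\bar S|=m$, so $B$ occupies exactly $m$ directions with exactly one edge in each, and $\bar S$ is a \emph{maximum} independent set of $C_{2m}$. The only two maximum independent sets of an even cycle are its two colour classes, so $\bar S$ is either the set of all odd directions or the set of all even directions. This dichotomy is the crux, and it leaves a single possibility to rule out.

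The main obstacle --- and the one genuinely non-symmetric step --- is eliminating the possibility that $B$ occupies only the even directions (i.e.\ $\bar S$ is the set of all odd directions). Here I would exploit that every boundary edge $[j,j+1]$ has odd direction $2j+1$, and that the polygon boundary $0\,1\,2\cdots(2m-1)\,0$ is a non-crossing Hamiltonian circuit; deleting any one edge yields a boundary SHP all of whose edges lie in odd directions. If $B$ missed every odd direction it would miss this SHP entirely, contradicting that $B$ is a blocker for $\h$. Therefore $\bar S$ must be the set of even directions, leaving $B$ with exactly one edge in each odd direction, as claimed.

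I expect the only points needing care to be the verification that every consecutive pair $D(k)\cup D(k+1)$ (not merely the even-indexed ones supplied by the preliminaries) is an SHP, and the standard but essential fact that $C_{2m}$ has precisely two maximum independent sets; both are short, so the argument is essentially complete once the independent-set reformulation is in place.
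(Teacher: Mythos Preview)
Your proof is correct and follows essentially the same route as the paper: both argue that $B$ cannot miss two consecutive directions (because each pair $D(k)\cup D(k+1)$ is an SHP), forcing the set of occupied directions to be one of the two parity classes, and then eliminate the even class via a boundary Hamiltonian path. Your explicit use of the reflection $j\mapsto -j$ and the independent-set language for $C_{2m}$ are pleasant embellishments, but the underlying argument is the same as the paper's.
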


\section{Proof of Theorem~\ref{Thm:Main}}
\label{sec:proof}

Let $B$ be a blocker for $\h$. By Observation~\ref{Obs1} we know that $B$ consists of one edge in each odd direction. Our goal is to show that $B$ satisfies the conditions of Theorem~\ref{Thm:SPMs} (and thus, belongs to $\B(\M)$). Since $B$ blocks every Hamiltonian path on the boundary of the convex hull, we know that $B$ contains at least two boundary edges. We also know that $B$ cannot contain two opposite boundary edges, as they belong to the same direction. Our first step will be to show that the boundary edges of $B$ form a single (consecutive) path.
\begin{proposition}\label{Prop1}
Let $B$ be a set of $m$ edges and let $2 \leq k \leq m-1$ (hence, $1 \leq m-k \leq m-2$). Suppose that $B$ contains the boundary edges $g=[0,1]$ and $f=[m+k-1,m+k]$ but does not contain any of the $m-k$ boundary edges $[m+k+j,m+k+j+1]$ ($0 \leq j < m-k$) in between. Then $B \not \in \B(\h)$.
\end{proposition}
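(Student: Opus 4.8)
The plan is to construct, under the stated hypothesis, a simple Hamiltonian path that avoids every edge of $B$, thereby contradicting $B \in \B(\h)$. By Observation~\ref{Obs1} we may assume $B$ has exactly one edge in each odd direction; the assumption gives us two boundary edges $g=[0,1]$ (direction $1$) and $f=[m+k-1,m+k]$ (direction $2m+2k-1 \equiv 2k-1$) together with a ``gap'' of $m-k$ consecutive boundary edges on one side that are \emph{not} in $B$. The idea is that this gap of missing boundary edges is large enough to splice together two boundary arcs into a single non-crossing Hamiltonian path while staying inside directions that $B$ has already ``spent'' elsewhere.

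First I would set up coordinates so that the two boundary arcs determined by $g$ and $f$ are $A_1 = (0,1,2,\dots,m+k-1)$ and $A_2 = (m+k, m+k+1, \dots, 2m-1, 0)$, and note that the boundary edges strictly between $m+k$ and $0$ going the ``short'' way — namely $[m+k+j, m+k+j+1]$ for $0 \le j < m-k$ — are all absent from $B$. The core construction: take the path that runs along one long boundary arc, then crosses over near the gap using one or two ``chord'' edges whose directions are odd directions whose $B$-representative lies elsewhere, then runs back along the complementary arc. Concretely I would try the Hamiltonian path $0,1,2,\dots,m+k-1$ (which uses boundary edges $[0,1],\dots,[m+k-2,m+k-1]$, none of which is $f$, and only $[0,1]=g$ is in $B$ — so this arc alone is blocked only at $g$), and then I need to continue from $m+k-1$ through the remaining vertices $m+k, m+k+1, \dots, 2m-1$ without reusing direction $1$. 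The natural move is to route $m+k-1 \to 2m-1 \to m+k \to m+k+1 \to \dots \to 2m-2$, i.e. one chord $[m+k-1, 2m-1]$, one chord $[2m-1, m+k]$ (this is the absent boundary edge... wait, that is direction $2m+k-1$) — more carefully, I would use the standard ``zigzag inside a boundary arc'' device: inside the arc $m+k, m+k+1, \dots, 2m-1, 0$ there are $m-k$ missing boundary edges and the zigzag path on $m-k+1$ vertices uses $m-k$ edges in $m-k$ distinct consecutive odd+even directions, which I can rotate/truncate to dodge the single $B$-edge in each relevant direction.

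The bookkeeping I would actually carry out: show that the $m-k$ boundary directions corresponding to the missing edges, together with the two directions of $g$ and $f$, give me enough ``free'' directions to build the second half of the path; and then verify non-crossing-ness, which for paths built from a boundary arc plus a nested fan of chords inside a complementary arc is automatic. The main obstacle — and the step I expect to need care — is that in each odd direction $B$ has exactly one edge, so every chord I use must avoid that specific edge; the proof hinges on the counting that the $m-k \ge 1$ absent boundary edges free up a run of consecutive directions long enough that I have genuine slack (at least one alternative chord in each needed direction), and on handling the two extreme sub-cases $k=2$ and $k=m-1$ where the gap is as large as possible or as small as possible. I would treat the general construction first, then check these boundary values of $k$ separately. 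Once a Hamiltonian path disjoint from $B$ is exhibited, $B$ fails to be a blocking set for $\h$, so $B \notin \B(\h)$, completing the proof.
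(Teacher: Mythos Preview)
Your proposal is not a proof but a sketch, and the underlying strategy --- exhibit a \emph{single} simple Hamiltonian path that avoids $B$ --- cannot be made to work for $k\ge 3$. After invoking Observation~\ref{Obs1} you know $B$'s edge in each odd direction, but the hypotheses pin down that edge only in directions $1$ (it is $g$), $2k-1$ (it is $f$), and $2k+1,\dots,2m-1$ (it is \emph{not} the boundary edge $[m+k+j,m+k+j+1]$, but could be anything else). In the $k-2$ odd directions $3,5,\dots,2k-3$ you know nothing at all. Hence an SHP that provably avoids every admissible $B$ must (i) avoid $g$ and $f$, (ii) use no edge whatsoever in directions $3,\dots,2k-3$, and (iii) in each direction $2k+1,\dots,2m-1$ use only the single ``missing'' boundary edge. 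Already for $m=4$, $k=3$ one checks (e.g.\ by the standard left/right description of SHPs on a convex polygon) that no SHP satisfies (i)--(iii): from vertex $1$ both boundary moves $[0,1]=g$ and $[1,2]$ (direction $3$) are forbidden, from vertex $6$ both $[5,6]$ (direction $3$) and $[6,7]=f$ are forbidden, and the few remaining branches all die. So a single universal path does not exist, and your hand-waved ``zigzag plus boundary arc'' cannot be completed.

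The paper's proof supplies exactly the missing idea: instead of one path, it builds a family $P_1,\dots,P_{k-1}$ of SHPs whose odd-direction edges lie in directions $\{2i-1,2i+1\}\cup\{2m-1\}$ and which pairwise share no odd-direction edge except $h=[2m-1,0]$. None of the $P_i$ contains $g$ or $f$, and $h\notin B$. A pigeonhole count then finishes it: blocking all $k-1$ paths would require $k-1$ distinct edges of $B$ in directions $1,3,\dots,2k-1$, but $B$'s representatives in directions $1$ and $2k-1$ are $g$ and $f$, which lie in no $P_i$; this leaves only $k-2$ usable edges of $B$ for $k-1$ paths, a contradiction. That counting step is precisely what a single-path argument cannot reproduce.
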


\begin{proof}
Define $h=[2m-1,0]$. We will construct $k-1$ SHPs $P_1,P_2,\ldots,P_{k-1}$, such that:
\begin{enumerate}
\item $f \not \in P_i$, $g \not \in P_i$, but $h \in P_i$, for each $i$, $1 \leq i \leq k-1$.

\item No two $P_i$'s share an edge of odd order, other than $h$.

\item All the edges of odd order in $P_i$ are in direction $2i+1$ or $2i-1$.

\item All the edges of even order in $P_i$ are in direction $2i$.
\end{enumerate}
This will prove that $B$ is not a blocker for all SHPs. Indeed, otherwise $B$ meets each $P_i$ in at least one edge. As $B$ contains only edges of odd order and $h \not \in B$, it follows from (2) that $B$ meets $P_1,P_2,\ldots,P_{k-1}$ in $k-1$ different edges. By (3), all these edges are in directions $1,3,5,\ldots,2k-1$. But by (1), the two edges $g$ (in direction $1$) and $f$ (in direction $2k-1$) of $B$ do not belong to any $P_i$. It follows that $k-2$ edges of $B$ meet the $k-1$ $P_i$'s, contrary to (2).

\medskip

\noindent The $k-1$ special SHPs are defined as follows. $P_i$ contains five batches of edges, denoted $A,B,C,D,E$: First, $m-1$ edges in direction $2i$, in two batches:
\[
A=\{[i-1,i+1],[i-2,i+2],\ldots,[0,2i]\} \mbox{ ($i$ edges), and }
\]
\[ 
D = \{[2m-1,2i+1],[2m-2,2i+2],\ldots,[m+i+1,m+i-1]\} \mbox{ ($m-1-i$ edges)}.
\]
Second, $i$ edges in direction $2i+1$:
\[
B = \{[i,i+1],[i-1,i+2],\ldots,[1,2i]\}.
\] 
Third, $m-1-i$ edges in direction $2i-1$:
\[
E = \{[2m-2,2i+1],[2m-3,2i+2],\ldots,[m+i,m+i-1]\}.
\]
Finally, a single edge $h$ in direction $2m-1$: $C=\{[0,2m-1]\}$. 

\medskip \noindent Batches $B$ and $A$ together form a path from $i$ to $0$. Batches $D$ and $E$ together form a path from $2m-1$ to $m+i$. These two paths are connected by $h$ (batch $C$) to a single path from $i$ to $m+i$ (see Figure~\ref{fig:2}).


It is clear that the $P_i$'s satisfy conditions~(1)--(4) above, which completes the proof.
\end{proof}

\begin{figure}[tb]
\begin{center}
\scalebox{1.1}{
\includegraphics{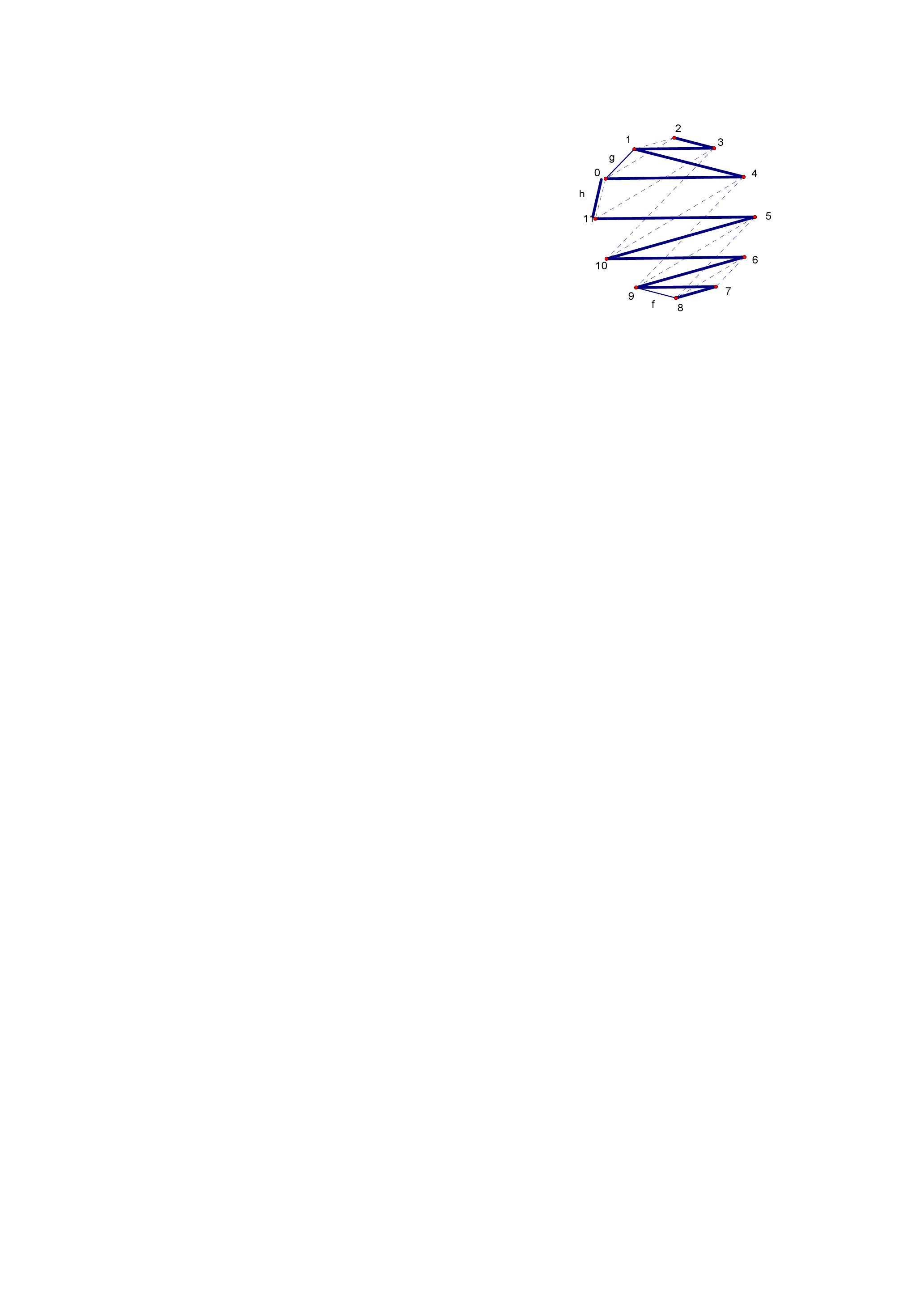}
} \caption{An illustration for the proof of Proposition~\ref{Prop1}. In the figure, $m=6$ and $k=3$, hence $g=[0,1]$,
$f=[8,9]$, and $h=[11,0]$. The SHP $P_1$ is depicted in punctured lines, and the SHP $P_2$ is depicted in bold lines.} \label{fig:2}
\end{center}
\end{figure}

Now we are ready to prove our main theorem.

\begin{proof}[Proof of Theorem~\ref{Thm:Main}]
Let $B \in \B(\h)$. By Proposition~\ref{Prop1}, we may assume that the boundary edges of the convex hull of $P$ in $B$ form a path $\langle 0,1,\ldots,j \rangle$ of length $j$, $2 \leq j \leq m$. As $B$ contains exactly one edge in each odd direction (by Observation~\ref{Obs1}), we know that $B$ contains $m-j$ further edges in directions $2j+1,2j+3,\ldots,2m-1$. All these edges are proper diagonals of $P$. We show several properties of these edges, which will imply that $B$ satisfies the conditions of Theorem~\ref{Thm:SPMs}.
\begin{enumerate}
\item None of the extra edges connects two vertices of the boundary path $\langle 0,1,\ldots,j \rangle$. Indeed, any edge of odd order that connects two vertices of that path is parallel to an edge of that path, and we know that $B$ cannot contain two edges of the same direction.

\item Suppose one of the extra edges connects two vertices that are not on the boundary path $\langle 0,1,\ldots,j \rangle$, or connects an endpoint of that boundary path with a vertex outside the path. To be specific, assume this edge is $[s,t]$, where $j \leq s<t<2m$ and $s \not \equiv t (\bmod 2)$. In this case, we construct a specific SHP $P_0$ that avoids $B$, yielding a contradiction. To construct $P_0$, we concatenate the boundary path $\langle s,s+1,\ldots,t \rangle$ (of length $t-s$) with the zig-zag path $\langle t,s-1,t+1,s-2,t+2,s-3,\ldots \rangle$ (of length $2m-1-(t-s)$), as demonstrated in Figure~\ref{fig:3}. The boundary path $\langle s,s+1,\ldots,t \rangle$ clearly avoids $B$. The zig-zag part uses only edges that are either parallel to $[s,t]$ (which belongs to $B$) but not the edge $[s,t]$ itself, or edges of even direction that are certainly not in $B$. Hence, $B$ avoids $P_0$.

\begin{figure}[tb]
\begin{center}
\scalebox{1.1}{
\includegraphics{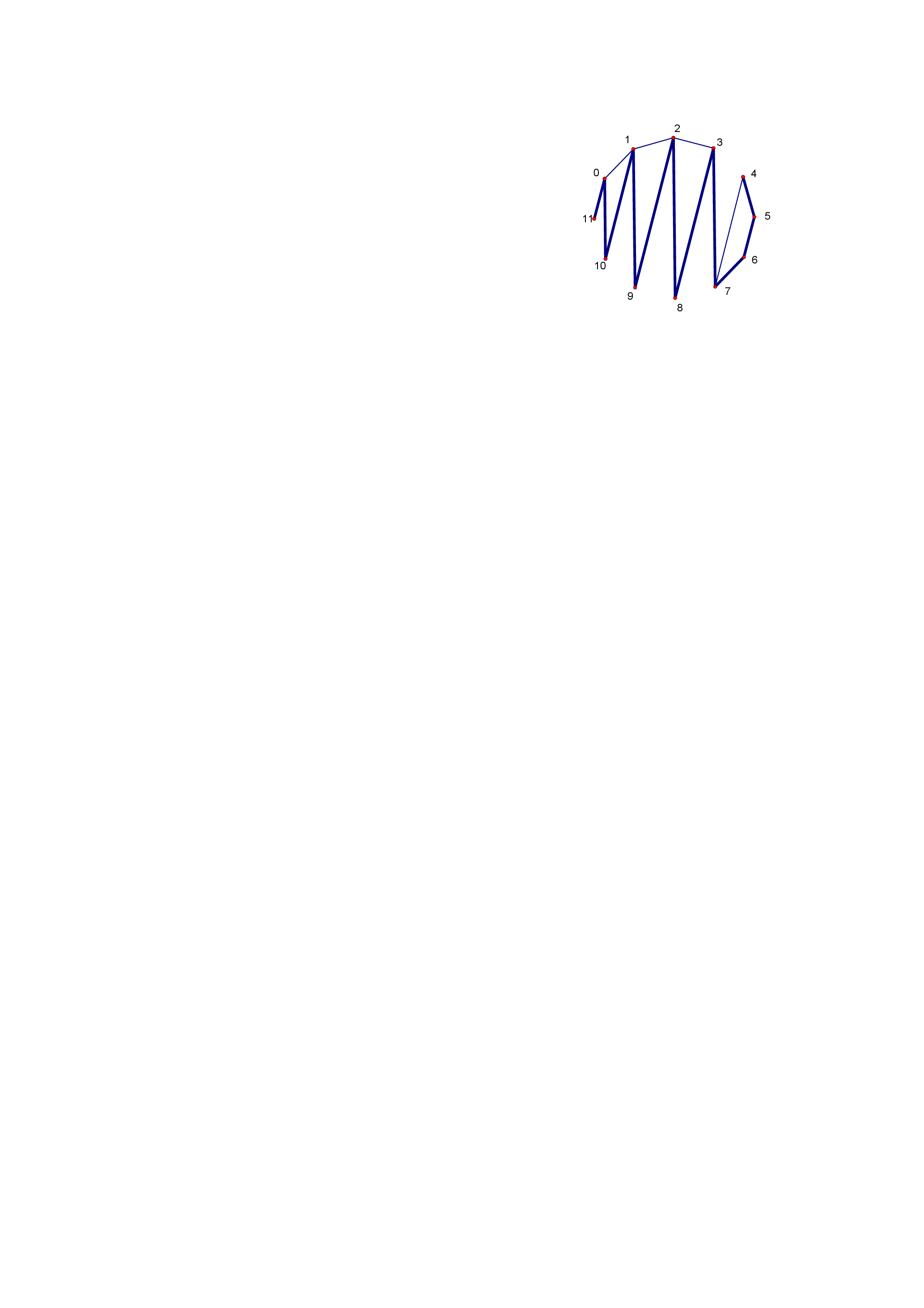}
} \caption{The SHP $P_0$ used in the proof of Theorem~\ref{Thm:Main}. In the figure, $j=3$, $s=4$, and $t=7$.
The edges of $P_0$ are drawn in bold.} \label{fig:3}
\end{center}
\end{figure}

\item Finally, assume two of the extra $m-j$ edges of $B$ are $[\alpha,\beta]$, $[\alpha',\beta']$, where
    \[
    0<\alpha<\alpha'<j, \qquad j<\beta < 2m, \qquad j<\beta' < 2m, \qquad \beta-\beta' \leq \alpha'-\alpha
    \]
    (and possibly even $\beta' \leq \beta$). If $\beta-\beta'=\alpha'-\alpha$, then these two edges are parallel (i.e., $\alpha+\beta=\alpha'+\beta'$), a contradiction. Thus we may assume that $\beta-\beta'<\alpha'-\alpha$, i.e., $\alpha+\beta<\alpha'+\beta'$, and therefore, $\alpha'+\beta' \geq \alpha+\beta+2$, since both $\alpha+\beta$ and $\alpha'+\beta'$ are odd numbers.

    We claim that $\alpha'+\beta'<2m$. Indeed, $\alpha' \leq j-1$ and $\beta' \leq 2m-1$, hence $\alpha'+\beta' \leq 2m+j-2$. If $\alpha'+\beta'>2m$, then $[\alpha',\beta']$ is parallel to one of the edges on the boundary path $\langle 0,1,\ldots,j \rangle$ which is impossible. Therefore, $\alpha'+\beta'<2m$.

    Now we construct an SHP $P_1$ that avoids $B$, yielding a contradiction. To do so, we construct three paths, as shown in Figure~\ref{fig:4}:

\begin{figure}[tb]
\begin{center}
\scalebox{1.1}{
\includegraphics{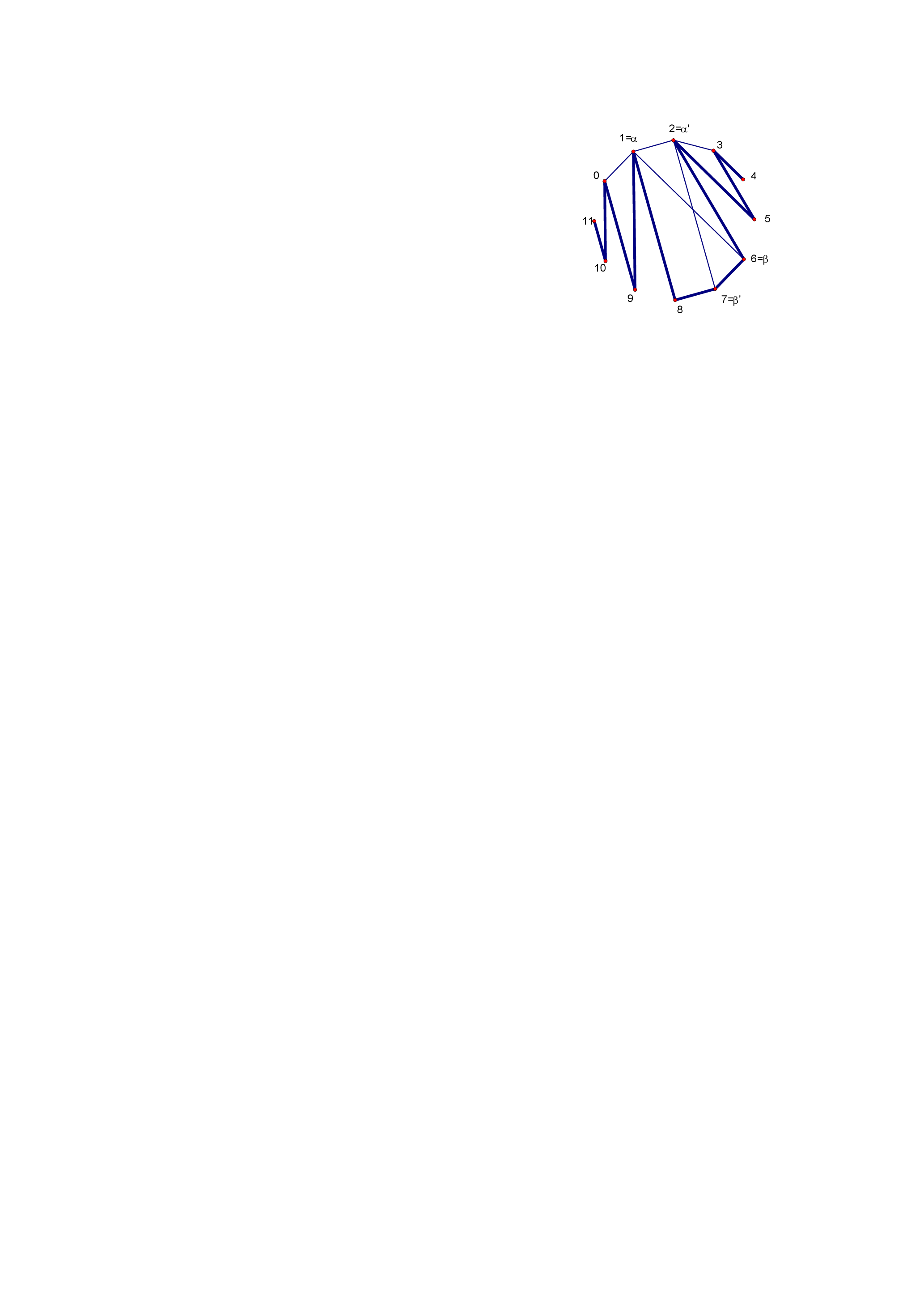}
} \caption{The SHP $P_1$ used in the proof of Theorem~\ref{Thm:Main}. In the figure, $j=3$, $\alpha=1$, $\alpha'=2$, $\beta=6$, and $\beta'=7$. The edges of $P_1$ are drawn bold.} \label{fig:4}
\end{center}
\end{figure}

    \begin{itemize}
    \item The zig-zag path $\langle \beta, \alpha+1, \beta-1,\alpha+2,\beta-2,\ldots, \rangle$, consisting (alternately) of edges of even direction $\alpha+\beta+1$ and odd direction $\alpha+\beta$, covering all vertices of the arc $\langle \alpha+1, \alpha+2,\ldots,\beta-1,\beta \rangle$. This path is traversed \emph{backwards}. Note that this path avoids $B$, since all its edges of odd direction are parallel, but not equal, to $[\alpha,\beta]$.

    \item The boundary path $\langle \beta,\beta+1,\ldots,\beta'+\alpha'-\alpha \rangle$. Note that $(\beta'+\alpha'-\alpha)-\beta \geq 2$. In addition, $\beta'+\alpha'-\alpha \leq 2m-2$, since $\beta'+\alpha' <2m$ and $\alpha \geq 1$. Hence, the path is well-defined. This path clearly avoids $B$.

    \item The zig-zag path $\langle \beta'+\alpha'-\alpha,\alpha, \beta'+\alpha'-\alpha+1, \alpha-1,\ldots \rangle$, consisting (alternately) of edges of odd direction $\beta'+\alpha'$ and edges of even direction $\beta'+\alpha'+1$, covering all vertices of the arc $\langle \beta'+\alpha'-\alpha, \beta'+\alpha'-\alpha+1,\ldots,2m-1,0,1,\ldots,\alpha \rangle$. Note that this path avoids $B$, since its edges of odd direction are all parallel (but not equal) to $[\alpha',\beta']$.
\end{itemize}
\end{enumerate}
We have thus shown that the $m-j$ extra (i.e., non-boundary) edges of $B$, of directions $2j+1,\ldots,2m-1$, must connect interior vertices of the boundary path $\langle 0,1,\ldots,j \rangle$ with interior vertices of the complementary path $\langle j,j+1,\ldots,m-1,0 \rangle$. Moreover, the ``roots'' of these edges (i.e., their intersections with $\langle 0,1,\ldots,j \rangle$) form a weakly monotone decreasing function of the direction. This is just another way of stating the conditions of Theorem~\ref{Thm:SPMs}. This completes the proof.
\end{proof}

\end{document}